\newtheorem{propo}{{\bf Proposition}}[section]
\newtheorem{coro}[propo]{{\bf Corollary}}
\newtheorem{lemma}[propo]{{\bf Lemma}} \newtheorem{theor}[propo]{{\bf
Theorem}} 
\newenvironment{proof}{{\bf Proof.}}{$\Box$}
\def\N{{\mathbb N}}
\begin{document}
\vspace*{1.0in}

\begin{center} ZINBIEL ALGEBRAS ARE NILPOTENT

\end{center}
\bigskip

\centerline {David A. Towers} \centerline {Department of
Mathematics, Lancaster University} \centerline {Lancaster LA1 4YF,
England}
\bigskip

\begin{abstract}
In this paper we show that every finite-dimensional Zinbiel algebra over an arbitrary field is nilpotent, extending a previous result by other authors that they are solvable.
\end{abstract}

\noindent {\it Mathematics Subject Classification 2020:} 17A32, 17B05, 17B20, 17B30, 17B50. \\
\noindent {\it Key Words and Phrases:} Zinbiel algebra, solvable, nilpotent, Frattini ideal.

\section{Introduction}
Zinbiel algebras were introduced by J.-L. Loday \cite{Loday95} in 1995. They are the Koszul dual of Leibniz algebras and J.M. Lemaire (see \cite{Lod01}) proposed the name of Zinbiel which is obtained by writing Leibniz backwards. Leibniz algebras were defined by Loday in 1993 (see \cite{Loday93}). They are a particular case of non-associative algebras and a non-anticommutative generalization of Lie algebras. In fact, they inherit an important property of Lie algebras: the right-multiplication operator is a derivation. Many well-known results on Lie algebras can be extended to Leibniz algebras. In some papers, like \cite{LP,O2005}, the authors study the cohomological and structural properties of Leibniz algebras. Ginzburg and Kapranov introduced and analysed the concept of Koszul dual operads \cite{GK}. Starting from this concept, it was proved in \cite{Loday95} that the dual of the category of Leibniz algebras is defined by the category determined by the so-called Zinbiel identity:
\[  [[x,y],z]=[x,[y,z]]+[x,[z,y]]
\]
Some properties of Zinbiel algebras were studied in \cite{AOK,D,DT}. Filiform Zinbiel algebras were described and classified in \cite{AOK,CKKK,CCGO}. The classification of complex Zinbiel algebras up to dimension $4$ was obtained in \cite{DT} and \cite{O2002}. Finally, a partial classification of the $5$-dimensional case was done in \cite{ACK}.
\par

More generally, in \cite{DT} the authors proved that every finite-dimensional Zinbiel algebra over an algebraically closed field is solvable and it is nilpotent over the complex number field. The requirement of the algebraically closed field can be removed, of course, since, if $\Omega$ is an extension field of $F$, $Z\otimes_F\Omega$ is solvable (respectively nilpotent) if and only if $Z$ is. Their results, therefore, show that all Zinbiel algebras are solvable, and that, over a field of characteristic zero, they are nilpotent. The purpose of this paper is to show that the restriction to characteristic zero in this latter result is unnecessary, and to provide an easier proof.
\par

Throughout, $Z$ will denote a finite-dimensional Zinbiel algebra over an arbitrary field $F$. We define the following series:
\[ Z^1=Z,Z^{k+1}=[Z,Z^k] \hbox{ and } Z^{(1)}=Z,Z^{(k+1)}=[Z^{(k)},Z^{(k)}] \hbox{ for all } k=2,3, \ldots
\]
Then we define $Z$ to be {\em nilpotent} (resp. {\em solvable}) if $Z^n=0$ (resp. $ Z^{(n)}=0$) for some $n \in \N$. In a nilpotent Zinbiel algebra, every product of $n$ elements is zero. An ideal $A$ of $Z$ is said to a {\em zero} ideal if $A^2=0$. The {\em Frattini subalgebra}, $F(Z)$, of $Z$ is the intersection of the maximal subalgebras of $Z$, and the {\em Frattini ideal}, $\phi(Z)$, is the largest ideal contained in $F(L)$.

\section{Main results}
\begin{lemma}\label{1} Let $B$ be a right ideal of a Zinbiel algebra $Z$. Then $[Z,B]$ is an ideal of $Z$.
\end{lemma}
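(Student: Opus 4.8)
The plan is to verify the two absorption conditions that make the subspace $[Z,B]$ a two-sided ideal: that $[Z,[Z,B]]\subseteq[Z,B]$ and that $[[Z,B],Z]\subseteq[Z,B]$. Since $[Z,B]$ is by definition the linear span of the products $[x,b]$ with $x\in Z$ and $b\in B$, it suffices to check these inclusions on spanning elements, i.e. to show that $[x,[y,b]]\in[Z,B]$ and $[[x,b],y]\in[Z,B]$ for all $x,y\in Z$ and $b\in B$. Throughout I will use only the Zinbiel identity $[[x,y],z]=[x,[y,z]]+[x,[z,y]]$ and the hypothesis that $B$ is a right ideal, i.e. $[B,Z]\subseteq B$.

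I would establish the left-absorption first, because the right-absorption will feed off it. Solving the Zinbiel identity (with third entry $b$) for the left-nested term gives $[x,[y,b]]=[[x,y],b]-[x,[b,y]]$. The first summand lies in $[Z,B]$ since $[x,y]\in Z$ and $b\in B$. For the second, the crucial point is that $[b,y]\in B$ because $B$ is a right ideal; hence $[x,[b,y]]$ is again a product of an element of $Z$ with an element of $B$, and so lies in $[Z,B]$. Therefore $[x,[y,b]]\in[Z,B]$, which gives $[Z,[Z,B]]\subseteq[Z,B]$.

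For the right-absorption I would apply the Zinbiel identity directly to the triple $(x,b,y)$, obtaining $[[x,b],y]=[x,[b,y]]+[x,[y,b]]$. The first summand lies in $[Z,B]$ for the same reason as before (again $[b,y]\in B$), while the second summand lies in $[Z,B]$ by the left-absorption just proved. Hence $[[x,b],y]\in[Z,B]$ and $[[Z,B],Z]\subseteq[Z,B]$. Combining the two inclusions shows $[Z,B]$ is a two-sided ideal.

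The only real idea here is the reorganization of the defining identity so that the right-ideal hypothesis can be applied to $[b,y]$; once one notices that every troublesome term can be rewritten as a product having an element of $B$ in the correct slot, the computation is immediate. I anticipate no genuine obstacle, only the bookkeeping of which slot the element of $B$ occupies — and the fact that $B$ is merely a right (not two-sided) ideal is precisely why the left-absorption must be deduced from the rearranged identity rather than read off directly.
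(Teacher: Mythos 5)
Your proof is correct and is essentially the paper's one-line argument written out on spanning elements: the left-absorption is exactly the paper's inclusion $[Z,[Z,B]]\subseteq [Z^2,B]+[Z,[B,Z]]\subseteq [Z,B]$. The only cosmetic difference is in the right-absorption, where the paper uses the symmetry of the Zinbiel identity in its last two slots to get $[[Z,B],Z]\subseteq [Z^2,B]$ directly, while you expand $[[x,b],y]=[x,[b,y]]+[x,[y,b]]$ and invoke the right-ideal hypothesis together with the inclusion you just proved; both are immediate consequences of the same identity.
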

\begin{proof} We have $[Z,[Z,B]]\subseteq [Z^2,B]+[Z,[B,Z]]\subseteq [Z,B]$, and $[[Z,B],Z]\subseteq [Z^2,B]\subseteq [Z,B]$.
\end{proof}

\begin{propo}\label{min} Let $A$ be a minimal ideal of a Zinbiel algebra $Z$ and let $B$ be a minimal right ideal of $Z$ with $B\subseteq A$. Then $A=B$.
\end{propo}
\begin{proof}  Clearly, $[Z,B]\subseteq [Z,A]\subseteq A$, so $[Z,B]=0$ or $[Z,B]=A$, by Lemma \ref{1}. The former implies that $B$ is an ideal of $Z$, and so $B=A$.
\par

So suppose that $$[Z,B]=A.$$ Now $[B,Z^2]$ is a right ideal inside $B$. If $B\subseteq [B,Z^{(k)}]$, then $$B\subseteq [[B,Z^{(k)}],Z^{(k)}]\subseteq [B,Z^{(k+1)}]$$ for all $k\geq 2$. Since $Z$ is solvable, this implies that $$[B,Z^2]=0.$$
If $[B,Z]=B$, then $B=[[B,Z],Z]\subseteq [B,Z^2]=0$, so $$[B,Z]=0.$$
Now, 
\begin{align*}
[Z,[Z^2,B]]&\subseteq [Z^3,B]+[Z,[B,Z^2]]\subseteq [Z^2,B] \hbox{ and  } \\
[[Z^2,B],Z]&\subseteq [[Z^2,Z],B]\subseteq [Z^2,B],
\end{align*} so $[Z^2,B]$ is an ideal of $Z$. As it is inside $A$ we have that $[Z^2,B]=A$ or $[Z^2,B]=0$. 
\par

The former implies that $B\subseteq [Z^2,B]$. But $B\subseteq [Z^{(k)},B]$ implies that $B\subseteq [Z^{(k)},[Z^{(k)},B]]\subseteq [Z^{(k+1)},B]$ for all $k\geq 2$, since $[Z^{(k)},[B,Z^{(k)}]]=0$. As $Z$ is solvable we have that $$[Z^2,B]=0.$$
But now $[A,Z]=[[Z,B],Z]\subseteq [Z^2,B]=0$ and $[Z,A]=[Z,[Z,B]]\subseteq [Z^2,B]+[Z,[B,Z]]=0$. It follows that $\dim A=1$ and $B=A$.
\end{proof}

\begin{coro}\label{central} If $A$ is a minimal ideal of the Zinbiel algebra $Z$, $[Z,A]=[A,Z]=0$ and $\dim A=1$.
\end{coro}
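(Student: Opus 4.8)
The plan is to reduce everything to Proposition \ref{min}. Since $A$ is an ideal it is in particular a nonzero right ideal of $Z$, and as $A$ is finite-dimensional the nonzero right ideals of $Z$ contained in $A$ have a minimal member $B$. Proposition \ref{min} then yields $A=B$, so $A$ is at once a minimal ideal and a minimal right ideal of $Z$. This reduction is the only genuinely new ingredient; the rest of the argument simply re-runs, with $B$ replaced by $A$, the computations already carried out in the proof of that proposition.

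To obtain $[A,Z]=0$ I would first note that $[A,Z^2]$ is a right ideal of $Z$ lying inside the minimal right ideal $A$, so it is $0$ or $A$. If it were $A$ then $A\subseteq[A,Z^{(k)}]$ for every $k\geq 2$, which is impossible as $Z$ is solvable; hence $[A,Z^2]=0$. Consequently $[A,Z]$ is itself a right ideal inside $A$ (since $[[A,Z],Z]\subseteq[A,Z^2]=0$), and the possibility $[A,Z]=A$ is excluded because it would give $A=[[A,Z],Z]\subseteq[A,Z^2]=0$. Therefore $[A,Z]=0$.

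The identity $[Z,A]=0$ follows by the same scheme. By Lemma \ref{1}, both $[Z,A]$ and $[Z^2,A]$ are ideals of $Z$ contained in $A$. Running the solvability argument on $[Z^2,A]$---now using $[A,Z]=0$, which annihilates the term $[Z^{(k)},[A,Z^{(k)}]]$---forces $[Z^2,A]=0$, and then the Zinbiel identity gives $[Z,[Z,A]]\subseteq[Z^2,A]+[Z,[A,Z]]=0$; minimality of $A$ therefore excludes $[Z,A]=A$ and leaves $[Z,A]=0$. Finally, since $[Z,A]=[A,Z]=0$, the ideal $A$ is central and in particular satisfies $A^2=0$, so every one-dimensional subspace of $A$ is an ideal of $Z$; minimality of $A$ then forces $\dim A=1$.

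I do not anticipate a real obstacle: the corollary is in essence a repackaging of Proposition \ref{min}, and the only points requiring attention are to check (via the Zinbiel identity) that the subspaces $[A,Z^2]$, $[A,Z]$, $[Z,A]$ and $[Z^2,A]$ are right ideals or ideals as asserted, and to reuse the solvability of $Z$ to discard the non-trivial alternative at each stage.
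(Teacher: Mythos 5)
Your proposal is correct and takes essentially the same route as the paper: everything is reduced to Proposition \ref{min}, and the unwanted products $[A,Z]$ and $[Z,A]$ are killed by the same minimality-plus-solvability induction using the Zinbiel identity. The only cosmetic difference is that you first establish that $A$ is itself a minimal right ideal and then re-run the computations from the proof of Proposition \ref{min} with $B=A$, whereas the paper applies the Proposition to a minimal right ideal inside $[A,Z]$; your version also spells out the $\dim A=1$ step, which the paper's proof of the corollary leaves implicit.
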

\begin{proof} It is clear that $[A,Z]$ is a right ideal inside $A$. If $B$ is a minimal right ideal of $Z$ inside $[A,Z]$ we have that $B=[A,Z]=A$, by Lemma \ref{1}. But then $A=[A,Z]=[[A,Z],Z]\subseteq [A,Z^2]$. As before, if $A\subseteq [A,Z^{(k)}]$, then $A\subseteq [[A,Z^{(k)}],Z^{(k)}]\subseteq [A,Z^{(k+1)}]$. Since $Z$ is solvable we have that $[A,Z]=0$.
\par

Now $[Z,A]$ is also a right ideal inside $A$. If $[Z,A]=A$ then $A=[Z,[Z,A]]\subseteq [Z^2,A]$ and a similar argument shows that $[Z,A]=0$.
\end{proof}

\begin{theor} Zinbiel algebras $Z$ over any field are nilpotent.
\end{theor}
\begin{proof} This is a straightforward induction. The result clearly holds if $\dim Z=1$. Suppose it holds for Zinbiel algebras of dimension $<k$ ($k>1$), and suppose that $\dim Z=k$. Let $A$ be a minimal ideal of $Z$. Then $L/A$ is nilpotent, by the inductive hypothesis, so there exists $n$ such that $Z^n\subseteq A$. But now $Z^{n+1}\subseteq [Z,A]=0$, by Corollary \ref{central}.
\end{proof}

\begin{coro}\label{max} Maximal subalgebras of Zinbiel algebras are ideals.
\end{coro}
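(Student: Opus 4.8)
The plan is to argue by induction on $\dim Z$, leaning entirely on Corollary \ref{central}, which tells us that every minimal ideal $A$ of $Z$ is one-dimensional and central in the strong sense that $[Z,A]=[A,Z]=0$. The base case $\dim Z=1$ is trivial, since then the only proper subalgebra is $0$, which is an ideal. So I would fix $k>1$, assume the statement for all Zinbiel algebras of dimension less than $k$, take $\dim Z=k$, let $M$ be a maximal subalgebra, and let $A$ be a minimal ideal of $Z$ (which exists since $Z\neq 0$ is finite-dimensional). The argument then splits according to whether $A\subseteq M$.

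In the case $A\subseteq M$, I would pass to the quotient: $M/A$ is a maximal subalgebra of the Zinbiel algebra $Z/A$, which has dimension $<k$, so by the inductive hypothesis $M/A$ is an ideal of $Z/A$. Pulling back along the canonical projection, $M$ is then an ideal of $Z$. This step is routine once one checks that subalgebras containing $A$ correspond to subalgebras of $Z/A$ and that maximality is preserved under this correspondence.

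In the remaining case $A\not\subseteq M$, I would first observe that $M+A$ is a subalgebra of $Z$, because $A$ is an ideal and hence $[M,A],[A,M],[A,A]\subseteq A$. Since $A\not\subseteq M$, this subalgebra properly contains $M$, so maximality forces $M+A=Z$. To see that $M$ is then an ideal, I would take any $z\in Z$, write $z=m'+a$ with $m'\in M$ and $a\in A$, and compute, for each $m\in M$, that $[z,m]=[m',m]+[a,m]$ and $[m,z]=[m,m']+[m,a]$. Here $[m',m],[m,m']\in M$ since $M$ is a subalgebra, while $[a,m]\in[A,Z]=0$ and $[m,a]\in[Z,A]=0$ by the centrality in Corollary \ref{central}. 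Hence $[z,m],[m,z]\in M$, so $M$ is an ideal and the induction closes.

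I do not expect any genuine obstacle here: all the work specific to Zinbiel algebras has already been absorbed into Corollary \ref{central}, and the computation in the second case collapses the moment the minimal ideal is known to be central. The only points demanding a little care are verifying that $M+A$ is a subalgebra and that the quotient argument in the first case is legitimate, both of which are standard.
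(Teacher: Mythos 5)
Your proof is correct, but it takes a different route from the paper's. The paper proves this corollary as a consequence of the nilpotency theorem: since $Z$ is nilpotent and the terms $Z^k$ of the lower central series are ideals (by Lemma \ref{1}), there is a first $k$ with $Z^k\not\subseteq M$ and $Z^{k+1}\subseteq M$; maximality then gives $Z=M+Z^k$, whence $[Z,M]\subseteq M+Z^{k+1}=M$ and $[M,Z]\subseteq M+[Z^k,Z]\subseteq M+Z^{k+1}=M$. Your argument instead runs an induction on dimension and leans only on Corollary \ref{central} (minimal ideals are central and one-dimensional), splitting on whether $A\subseteq M$: the quotient step and the case $M+A=Z$ are both handled correctly, and the small points you flag (that $M+A$ is a subalgebra because $A$ is an ideal, and that the subalgebra correspondence under $Z\to Z/A$ preserves maximality) do check out; note also that for $k>1$ a minimal ideal is automatically proper since it is one-dimensional. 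What the paper's proof buys is brevity and no case analysis, at the cost of invoking the full nilpotency theorem; what yours buys is independence from that theorem --- it needs only the centrality of minimal ideals, and it mirrors the induction-on-dimension-via-a-minimal-ideal template that the paper itself uses to prove nilpotency. Both are sound.
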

\begin{proof} Let M be a maximal subalgebra of the Zinbiel algebra $Z$. Then there exists $k$ such that $Z^k\not \subseteq M$ but $Z^{k+1}\subseteq Z$. Since $Z^k$ is an ideal of $Z$, $Z=M+Z^k$. Hence $[Z,M]\subseteq M+Z^{k+1}=M$ and $[M,Z]\subseteq M+[Z^k,Z]$. But an easy induction proof shows that $[Z^k,Z]\subseteq Z^{k+1}\subseteq M$.
\end{proof}

\begin{coro} For every Zinbiel algebra $Z$, $F(Z)=\phi(Z)=Z^2$.
\end{coro}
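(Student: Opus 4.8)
The plan is to prove the two equalities in turn, first obtaining $\phi(Z)=F(Z)$ as a formal consequence of Corollary \ref{max}, and then pinning down $F(Z)$ by the two inclusions $Z^2\subseteq F(Z)$ and $F(Z)\subseteq Z^2$. By Corollary \ref{max} every maximal subalgebra of $Z$ is an ideal, so $F(Z)$, being the intersection of the maximal subalgebras, is an intersection of ideals and hence is itself an ideal of $Z$. Since $\phi(Z)$ is by definition the largest ideal contained in $F(Z)$, and $F(Z)$ is now seen to be such an ideal, I get $\phi(Z)=F(Z)$ at once. (Note $Z^2$ is an ideal by Lemma \ref{1} applied to the right ideal $Z$, so all three objects are ideals.)

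For the inclusion $Z^2\subseteq F(Z)$, I take an arbitrary maximal subalgebra $M$; by Corollary \ref{max} it is an ideal, so $Z/M$ is a Zinbiel algebra whose only subalgebras are $0$ and itself, since subalgebras of $Z/M$ correspond to subalgebras of $Z$ lying between $M$ and $Z$. The quotient $Z/M$ is nilpotent by the Theorem, so $(Z/M)^2\neq Z/M$ unless $Z/M=0$; but $(Z/M)^2$ is an ideal, hence a subalgebra, and the only proper subalgebra is $0$. This forces $(Z/M)^2=0$, which says exactly that $Z^2\subseteq M$. As $M$ was arbitrary, $Z^2\subseteq F(Z)$.

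For the reverse inclusion $F(Z)\subseteq Z^2$, I exploit that $Z/Z^2$ has zero multiplication, so every subspace of $Z$ containing $Z^2$ is automatically a subalgebra; in particular every hyperplane of $Z$ containing $Z^2$ is a maximal subalgebra. The intersection of this family of hyperplanes is precisely $Z^2$, because the intersection of all hyperplanes of the vector space $Z/Z^2$ is $0$: for any nonzero coset one can choose a linear functional not vanishing on it, whose kernel is a hyperplane avoiding that coset. Hence $F(Z)$, being contained in the intersection of this particular family of maximal subalgebras, satisfies $F(Z)\subseteq Z^2$. Combining the two inclusions gives $F(Z)=Z^2$, and together with the first paragraph, $F(Z)=\phi(Z)=Z^2$.

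I expect the only genuinely delicate point to be the vector-space fact that the hyperplanes through $Z^2$ meet exactly in $Z^2$, which must be justified over an arbitrary field rather than taken for granted; I would also check the degenerate cases $Z=0$ and $Z=Z^2$ (where nilpotency forces $Z=0$) to ensure $Z/Z^2$ is nonzero, so that hyperplanes actually exist and the argument does not vacuously collapse.
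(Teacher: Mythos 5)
Your proposal is correct and follows essentially the same route as the paper: use Corollary \ref{max} to see that $Z/M$ is a zero algebra for each maximal subalgebra $M$ (giving $Z^2\subseteq F(Z)$), and observe the reverse inclusion $F(Z)\subseteq Z^2$ via hyperplanes containing $Z^2$. You simply spell out the details the paper compresses into ``clearly'' --- the hyperplane argument, the nonvacuity check, and the identification $\phi(Z)=F(Z)$ --- all of which are accurate.
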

\begin{proof} By Corollary \ref{max}, for every maximal subalgebra $M$, $Z/M$ is a zero algebra, so $Z^2\subseteq M$ and $Z^2\subseteq F(Z)$. But, clearly, $F(Z)\subseteq Z^2$.
\end{proof}

\end{document}